\DeclareMathOperator{\Tr}{Tr}
\newtheorem{theorem}{Theorem}[section]
\newtheorem{definition}[theorem]{Definition}
\newtheorem{lemma}[theorem]{Lemma}
\newtheorem{corollary}[theorem]{Corollary}
\newtheorem{remark}[theorem]{Remark}
\newtheorem{example}[theorem]{Example}
\newcounter{Stepnumber}
\begin{document}
%题名信息
\title{Formula for the IDS of Periodic Jacobi Matrices}
\author{Liangping Qi\thanks{School of Science and Technology, Tianjin University of Finance and Economics, Tianjin, P. R. China. Email: lqi@tjufe.edu.cn.}}
\date{}
\maketitle

\begin{abstract}
We prove that on the spectrum the integrated density of states (IDS, for short) of periodic Jacobi matrices is related to the discriminant. The method is to count the number of generalized zeros of Bloch wave solutions.
\end{abstract}

\textbf{Key words:} Jacobi matrices; Integrated density of states.

\textbf{AMS subject classification:} 47B36, 47A10.

\section{Introduction}

Jacobi matrices acting on $l^2(\mathbb{Z})$
\begin{equation}\label{eq:H}
  (Hu)(n)=u(n+1)+u(n-1)+V(n)u(n), n\in \mathbb{Z},
\end{equation}
and the continuum analog, $-\Delta+q$, on $L^2(\mathbb{R})$, have subtle and fascinating spectral properties. The IDS, $k^H(E)$, for such operators with almost periodic potentials is studied in \cite{AS83}. In this almost periodic regime, the spectrum of $H$ is given by the points of non-constancy of $k^H$, i.e., $\sigma(H)=\{E; k^H(E+\epsilon)-k^H(E-\epsilon)>0, \forall \epsilon>0\}$. Moreover, $k^H(E)$ is proved to lie in the frequency module for real $E$ in the resolvent set \cite{DS83,JM82}. This yields a labelling of the gaps of the spectrum.

For $p$-periodic $V$, $k^H(E)$ has the value $j/p$ on the $j$-th gap \cite{BS82}. The expression of $k^H(E)$ on $\sigma(H)$ is only known to be complicate. We shall mainly reveal the following formula.

\begin{theorem}\label{th:IDS}
Suppose that $V: \mathbb{Z}\rightarrow \mathbb{R}$ is a $p$-periodic potential, $B_j=[a_j, b_j]$, $j=1$, $\ldots$, $p$, are the $p$ bands of $\sigma(H)$ with
\begin{equation*}
  a_1<b_1\leq a_2<b_2\leq \cdots \leq a_p<b_p,
\end{equation*}
$G_0=(-\infty, a_1)$ and $G_p=(b_p, \infty)$ are unbounded gaps, $G_j=(b_j, a_{j+1})$ or $G_j=\{b_j\}=\{a_{j+1}\}$ is the open or collapsed gap which separate $B_j$ and $B_{j+1}$ for $j=1$, $\ldots$, $p-1$. Then
\begin{equation}\label{eq:IDS}
k^H(E)=
\begin{cases}
\dfrac{j}{p},\quad &E\in G_j,\ j=0, \ldots, p,\\
\dfrac{j}{p}-\dfrac{\vartheta}{\pi},\quad &E\in B_j, 1\leq j\leq p, p-j=2m, m\in \mathbb{Z},\\
\dfrac{j-1}{p}+\dfrac{\vartheta}{\pi},\quad &E\in B_j, 1\leq j\leq p, p-j=2m-1, m\in \mathbb{Z},
\end{cases}
\end{equation}
where $\vartheta\in [0, \pi/p]$ is determined by $D(E)=2\cos p\vartheta$ for $E\in \sigma(H)$.
\end{theorem}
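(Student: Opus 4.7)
The plan is to derive (\ref{eq:IDS}) from the Sturm oscillation theorem for Jacobi matrices, which says that for any non-trivial real solution $u$ of $Hu=Eu$ on $\mathbb{Z}$, the IDS equals the asymptotic density of generalized zeros of $u$, where a generalized zero at $n$ means $u(n)=0$ or $u(n)u(n+1)<0$. For $E\in G_j$ this density is $j/p$ by the gap-labelling result \cite{BS82} already recalled in the introduction, so only the band case remains.

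\textbf{Bloch waves and Prüfer angle.} Fix $E$ in the interior of a band $B_j$; then $|D(E)|<2$ and the monodromy has eigenvalues $e^{\pm ip\vartheta}$. Take the complex Bloch solution $\psi(n)=e^{i\vartheta n}\phi(n)$ with $p$-periodic $\phi$, and set $u(n)=\operatorname{Re}\psi(n)$, $v(n)=\operatorname{Im}\psi(n)$. These are linearly independent real solutions of $Hu=Eu$ (the Wronskian of $\psi$ and $\bar\psi$ is non-zero for $\vartheta\notin\{0,\pi/p\}$). The Floquet identity $\psi(n+p)=e^{ip\vartheta}\psi(n)$ translates into the rotation relation
\[
\begin{pmatrix}u(n+p)\\ v(n+p)\end{pmatrix}=R_{p\vartheta}\begin{pmatrix}u(n)\\ v(n)\end{pmatrix},
\]
where $R_{p\vartheta}$ is rotation by $p\vartheta$ in $\mathbb{R}^2$. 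Let $\theta(n)\in\mathbb{R}$ denote the Prüfer angle obtained by continuously lifting $\arg(u(n)+iv(n))$ with each step constrained to $(-\pi,\pi)$. A generalized zero of $u$ at $n$ corresponds to an odd multiple of $\pi/2$ lying between $\theta(n)$ and $\theta(n+1)$, so
\[
\#\{\text{generalized zeros of }u\text{ on }[1,N]\}=\frac{|\theta(N+1)-\theta(1)|}{\pi}+O(1).
\]

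\textbf{Identifying the formula.} By the rotation identity, the per-period angular increment $\Theta:=\theta(n+p)-\theta(n)$ is a band-constant satisfying $\Theta\equiv\pm p\vartheta\pmod{2\pi}$, hence has the form $I_j\pi+\epsilon_j p\vartheta$ for a unique integer $I_j$ and sign $\epsilon_j\in\{\pm 1\}$ depending only on the band. Combining with the oscillation count yields
\[
k^H(E)=\frac{I_j}{p}+\epsilon_j\frac{\vartheta}{\pi}\qquad(E\in B_j).
\]
To pin down $(I_j,\epsilon_j)$ I would use continuity of $k^H$ in $E$ together with the two anchors $k^H(a_j)=(j-1)/p$ and $k^H(b_j)=j/p$. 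Since $D(E)$ is strictly monotone on each band with $D'$ alternating in sign across consecutive bands, $\vartheta=0$ corresponds to the band edge where $D=2$ and $\vartheta=\pi/p$ to the edge where $D=-2$; whether $D=2$ occurs at the left or the right edge of $B_j$ is controlled by the parity of $p-j$. Matching the formula at the two band edges then forces $(I_j,\epsilon_j)=(j,-1)$ when $p-j$ is even and $(I_j,\epsilon_j)=(j-1,+1)$ when $p-j$ is odd, reproducing (\ref{eq:IDS}).

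\textbf{Main obstacle.} The principal technical difficulty is the rigorous discrete bookkeeping that converts generalized zeros of the single sequence $u$ into $\pi$-increments of the lifted Prüfer angle of the pair $(u,v)$: the step-size constraint $|\theta(n+1)-\theta(n)|<\pi$ must be maintained so that each generalized zero contributes exactly one crossing, and the $O(1)$ boundary term must be shown not to contaminate the asymptotic density. A secondary but essential point is the parity dichotomy in (\ref{eq:IDS}), which traces back to the alternation of $\operatorname{sgn} D'(E)$ between consecutive bands; this controls whether $\vartheta$ is an increasing or decreasing function of $E$ on $B_j$ and therefore reverses the identification between $\vartheta$ and the left/right band edge from one band to the next.
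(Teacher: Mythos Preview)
Your route is essentially the paper's: take a complex Bloch solution $\psi=u+iv$, lift $\arg\psi$ continuously (the paper does this by linear interpolation of $\psi(n)$, you by the step constraint $|\theta(n+1)-\theta(n)|<\pi$), convert generalized zeros into axis crossings of the lifted angle, read off the per-period increment as $p\vartheta\pmod{2\pi}$, and fix the undetermined integer by matching at the band edges. The paper's device of counting the zeros of \emph{both} $u$ and $v$ simultaneously---so that each $\pi/2$ of rotation yields exactly one axis crossing---is what cleanly disposes of the bookkeeping in your ``main obstacle''; the required monotonicity of the angle is forced by the nonvanishing Wronskian, exactly as you anticipate.

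There is, however, a sign slip in your opening oscillation statement. For the operator $H$ of this paper, with $+1$ off-diagonals, Lemma~\ref{lem:NkL} reads $k_L^H(E)=1-N_L^H(E)/(L-1)$: the IDS is one \emph{minus} the zero density, not the zero density itself (the identity you wrote holds for $J=-H$). In your argument this error is silently repaired because you solve for $(I_j,\epsilon_j)$ from the known gap values $k^H(a_j)=(j-1)/p$, $k^H(b_j)=j/p$ rather than from the angular computation directly; but note that a correct derivation forces $\Theta\equiv p\vartheta\pmod{2\pi}$ with an \emph{even} multiple of $\pi$, which is inconsistent with your ansatz $\Theta=I_j\pi+\epsilon_j p\vartheta$ allowing arbitrary integers $I_j$. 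The paper carries the correct sign through to obtain $k^H(E)=1\mp\vartheta/\pi-2m/p$ with $2m$ even, and it is precisely this evenness that forces the $p-j$ parity dichotomy---equivalent to, but reached differently from, your $D'$-alternation argument (which in turn needs an anchor, e.g.\ $D(b_p)=2$, that you have not supplied).
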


\section{Proofs}

Denote by $M_E(n)$ the transfer matrix associated with the eigenvalue equation
\begin{equation}\label{eq:eigenH}
  Hu=Eu,
\end{equation}
where $E\in \mathbb{C}$ and $n\in \mathbb{Z}$. Let $D(E)=\Tr M_E(p)$ be the discriminant. Denote by $\# S$ the number of elements in a set $S$. Set $J=-H$.

\begin{definition}[\cite{DS83}]\label{df:Nk}
Let $E\in \mathbb{R}$, $A=H$ or $J$, $u^A$ be the solution to the eigenvalue equation $Au=Eu$ with initial condition $u(0)=\cos\theta$, $u(1)=\sin\theta$, and $A_L$ the restriction of $A$ to $[1, L-1]\cap \mathbb{Z}$ with boundary conditions $u(0)/u(1)=\cot\theta$, $u(L)=0$. Define
\begin{align*}
N_L^A(E)&=\#\{n\in (1, L]\cap \mathbb{Z}; u^A(n)=0~\text{or}~u^A(n-1)u^A(n)<0\},\\
k_L^A(E)&=\frac{1}{L-1}\#\{\lambda\in\sigma(A_L); \lambda\leq E\},
\end{align*}
where the dependence on $\theta$ is left implicit. That is, $N_L^A(E)$ is the number of generalized zeros \cite[p. 321]{Ela05} of $u^A$ in $(1, L]\cap \mathbb{Z}$, and $k_L^A(E)$ the IDS for $A_L$.
\end{definition}

It is well-known that there exists the limit $\lim_{L\rightarrow\infty}k_L^A(E)=:k^A(E)$, called the integrated density of states (IDS), independent of $\theta$, for $A=H$ and $J$. The following result is basic in computing the IDS. The formula for $k_L^J$ is proved in \cite{DS83}. The proof of the one for $k_L^H$ is similar. So we omit it.
\begin{lemma}\label{lem:NkL}
$k_L^J(E)=\dfrac{N_L^J(E)}{L-1}$, $k_L^H(E)=1-\dfrac{N_L^H(E)}{L-1}$.
\end{lemma}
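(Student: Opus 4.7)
The plan is to reduce the second identity to the first, which is already attributed to \cite{DS83}, using the operator identity $J=-H$. Conceptually, reflecting the spectrum turns ``eigenvalues $\leq E$ of $H_L$'' into ``eigenvalues $\geq -E$ of $J_L$'', and complementing within the total count of $L-1$ eigenvalues produces the $1-$ appearing in $1-N_L^H(E)/(L-1)$.

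Two elementary observations drive the reduction. First, the three-term recursion $(Hu)(n)=Eu(n)$ coincides with $(Ju)(n)=-Eu(n)$, so with the common initial data $(u(0),u(1))=(\cos\theta,\sin\theta)$ the solutions agree as functions of $n$: $u^H(n;E)=u^J(n;-E)$. Both the vanishing and the sign-change conditions in Definition~\ref{df:Nk} are preserved under $E\mapsto -E$, and consequently $N_L^H(E)=N_L^J(-E)$. Second, the finite restrictions $H_L$ and $J_L$ are defined on the same $(L-1)$-dimensional space with the same boundary data, so $J_L=-H_L$, $\sigma(J_L)=-\sigma(H_L)$, and each is a tridiagonal matrix with unit off-diagonal entries, hence has $L-1$ simple eigenvalues.

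Combining these, I would write
\[
(L-1)\,k_L^H(E)=\#\{\lambda\in\sigma(H_L):\lambda\leq E\}=\#\{\mu\in\sigma(J_L):\mu\geq -E\}=(L-1)-\#\{\mu\in\sigma(J_L):\mu<-E\},
\]
and then apply the $J$-formula at energy $-E$ to substitute $\#\{\mu\in\sigma(J_L):\mu\leq -E\}=N_L^J(-E)=N_L^H(E)$. This yields the desired identity for every $E\notin\sigma(H_L)$, where the $<$ and $\leq$ counts coincide.

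The one delicate point, and the main obstacle, is to settle the boundary case $E\in\sigma(H_L)$ (equivalently $-E\in\sigma(J_L)$): at such an $E$ one has $u^H(L;E)=0$, which contributes a generalized zero at the endpoint $n=L$, and this has to be reconciled with the $\leq$ in the definition of $k_L^H$. Since both sides of the claimed identity are piecewise constant in $E$ with jumps only on $\sigma(H_L)$, the simplest route is to match jump sizes: as $E$ crosses a simple eigenvalue upward, $k_L^H(E)$ jumps by $+1/(L-1)$, and one verifies directly (using that $u^H$ becomes less oscillatory as $E$ grows) that $N_L^H(E)$ drops by exactly $1$ at the same point. This pointwise jump-matching upgrades the generic equality to the full statement of the lemma.
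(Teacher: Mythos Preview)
Your reduction via the symmetry $J=-H$ is a genuinely different route from the paper's. The paper simply says the $H$-formula is proved ``similarly'' to the Delyon--Souillard argument for $J$ and omits the details; the intended computation is a direct Lagrange diagonalization of $H_L-E$, which produces diagonal entries $\delta_n^H=-u^H(n+1)/u^H(n)$, so that $\delta_n^H\le 0$ corresponds to the \emph{absence} of a sign change and the complement in $L-1$ yields the $1-$ in the formula. Your approach avoids redoing that computation: the identity $N_L^H(E)=N_L^J(-E)$ together with the $J$-formula and the eigenvalue reflection $\sigma(J_L)=-\sigma(H_L)$ immediately gives the result for all $E\notin\sigma(H_L)$. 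This is cleaner, and it is in fact the same symmetry the paper invokes later (in the Remark after Theorem~\ref{th:IDS}) to relate $k^H$ and $k^J$.

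Where your argument does not close is exactly the eigenvalue case, and the obstruction is real rather than cosmetic. Matching jump sizes does not fix the value \emph{at} the jump; for that you would need one-sided continuity of both sides from the same side. But $k_L^H$ is right-continuous while $N_L^H$ is left-continuous at an eigenvalue $E_0$: the zero $u^H(L;E_0)=0$ is counted at $E_0$, becomes a sign change (still counted) for $E$ just below, and becomes a non-sign-change (not counted) for $E$ just above. Your own reflection computation exhibits the mismatch directly: at $E_0\in\sigma(H_L)$ one has $\#\{\lambda\le E_0\}+\#\{\mu\le -E_0\}=(L-1)+1$, so substituting the $J$-formula yields $(L-1)k_L^H(E_0)=(L-1)-N_L^H(E_0)+1$, off by $1$. (A two-site check with $V=0$, $\theta=\pi/2$, $L=3$, $E=1$ gives $k_L^H=1$ but $1-N_L^H/2=1/2$.) This discrepancy at finitely many energies is irrelevant for the IDS limit and for every use of the lemma in the paper, but it means the pointwise statement cannot be upgraded by the jump-matching you propose; the honest conclusion of your argument is the identity for $E\notin\sigma(H_L)$.
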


For energies $E$ in the spectrum, the existence of Bloch wave solutions is the key observation in determining the IDS.
\begin{lemma}\label{lem:Bloch}
Suppose that $E\in \mathbb{R}$ and $\vartheta\in (0, \pi/p)$, then $D(E)=2\cos p\vartheta$ if and only if \eqref{eq:eigenH} has a nontrivial complex solution $u$ such that
\begin{equation}\label{eq:Bloch}
  u(n+p)=e^{ip\vartheta}u(n),\quad \forall n\in \mathbb{Z}.
\end{equation}
Moreover, $u_1:=\Re u$ and $u_2:=\Im u$ are real linearly independent solutions to \eqref{eq:eigenH}.
\end{lemma}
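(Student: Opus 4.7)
The plan is to diagonalize the transfer matrix $M_E(p)$ over one period and exploit the fact that $\det M_E(p)=1$, which holds because each single-step transfer matrix for the Jacobi recurrence is unimodular. Consequently $M_E(p)$ satisfies the characteristic equation $\lambda^{2}-D(E)\lambda+1=0$, so its eigenvalues multiply to $1$.

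For the forward direction, assume $D(E)=2\cos p\vartheta$. Then the characteristic polynomial factors as $(\lambda-e^{ip\vartheta})(\lambda-e^{-ip\vartheta})$, and because $\vartheta\in(0,\pi/p)$ the two roots are distinct. I would pick a nonzero eigenvector $(u(0),u(1))^{T}\in\mathbb{C}^{2}$ for the eigenvalue $e^{ip\vartheta}$ and then extend $u$ to all of $\mathbb{Z}$ by the Jacobi recurrence, obtaining a nontrivial complex solution of \eqref{eq:eigenH}. By construction $(u(p),u(p+1))^{T}=M_E(p)(u(0),u(1))^{T}=e^{ip\vartheta}(u(0),u(1))^{T}$, and $p$-periodicity of $V$ propagates this one-step Bloch relation to all $n$, yielding \eqref{eq:Bloch}.

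For the converse, any solution satisfying \eqref{eq:Bloch} has $(u(0),u(1))^{T}$ as an eigenvector of $M_E(p)$ with eigenvalue $e^{ip\vartheta}$. Since $\det M_E(p)=1$, the other eigenvalue is $e^{-ip\vartheta}$, so $D(E)=\Tr M_E(p)=e^{ip\vartheta}+e^{-ip\vartheta}=2\cos p\vartheta$.

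To see that $u_1=\Re u$ and $u_2=\Im u$ are real linearly independent solutions, I observe that \eqref{eq:eigenH} has real coefficients, so $\bar{u}$ is also a solution and satisfies $\bar{u}(n+p)=e^{-ip\vartheta}\bar{u}(n)$. For $\vartheta\in(0,\pi/p)$ we have $e^{ip\vartheta}\neq e^{-ip\vartheta}$, so $u$ and $\bar{u}$ are linearly independent over $\mathbb{C}$; equivalently $u_1=(u+\bar{u})/2$ and $u_2=(u-\bar{u})/(2i)$ are linearly independent over $\mathbb{R}$. I do not anticipate any real obstacle: the only subtlety is verifying that the Bloch relation established at $n=0$ propagates to every $n\in\mathbb{Z}$, which follows immediately from the periodicity of $V$ combined with the group law $M_E(n+p)=M_E(p)\,M_E(n)$ (applied iteratively in both directions).
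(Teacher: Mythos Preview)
Your argument is correct and in fact more self-contained than the paper's. The paper does not prove the ``if and only if'' at all: it simply cites \cite{Las92} for the existence of a Bloch solution and then concentrates on the linear-independence claim, which it handles by a direct contradiction---assuming $u_2=cu_1$ for some real $c\neq0$, one deduces $u_1(n+p)=e^{ip\vartheta}u_1(n)$, which is impossible for a real sequence when $e^{ip\vartheta}\notin\mathbb{R}$. You instead derive both directions of the equivalence explicitly from the characteristic polynomial of $M_E(p)$, and for linear independence you pass to the conjugate solution $\bar u$ and use that eigenvectors for the distinct eigenvalues $e^{\pm ip\vartheta}$ are independent. This is arguably cleaner and makes the role of the hypothesis $\vartheta\in(0,\pi/p)$ (hence $e^{ip\vartheta}\neq e^{-ip\vartheta}$) more transparent. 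One cosmetic point: the paper's transfer-matrix convention records initial data as $(u(1),u(0))^{T}$ rather than $(u(0),u(1))^{T}$, so your eigenvector and cocycle formulas should be adjusted accordingly, but this does not affect the logic.
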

\begin{proof}
The existence of such a nontrivial solution satisfying \eqref{eq:Bloch} is well-known, e.g., \cite{Las92}. Since $\vartheta\in (0, \pi/p)$, \eqref{eq:Bloch} implies that $u$ is not real. Because $E\in \mathbb{R}$, $u_1$ and $u_2$ are real solutions to \eqref{eq:eigenH}. If there is a real constant $c$ with $u_1=cu_2$, a direct calculation shows that
\begin{align*}
(c+i)u_2(n+p)&=(c+i)e^{ip\vartheta}u_2(n),\\
u_2(n+p)&=e^{ip\vartheta}u_2(n)
\end{align*}
for all $n\in \mathbb{Z}$. This contradicts the fact that $u_2$ is a real sequence.
\end{proof}

\begin{proof}[Proof of \Cref{th:IDS}]
Starting from \Cref{lem:NkL}, we shall reduce the problem of counting the number of generalized zeros to the one of estimating the argument of a continuous function, which is easier to deal with.

Let $E$ be in the interior of $\sigma(H)$ with $D(E)=2\cos p\vartheta$ for some $\vartheta\in (0, \pi/p)$, $u$, $u_1$ and $u_2$ be the sequences as in \Cref{lem:Bloch}. Since $u_1$ and $u_2$ are linearly independent solutions of \eqref{eq:eigenH}, the Wronskian of $u_1$ and $u_2$ is a nonzero constant which is denoted by $W_0$. Therefore, for all $n\in \mathbb{Z}$,
\begin{equation}\label{eq:W}
\begin{split}
0\neq W_0&=
\begin{vmatrix}
  u_1(n+1) & u_2(n+1) \\
  u_1(n) & u_2(n) \\
\end{vmatrix}
=
\begin{vmatrix}
  u_1(n+1) & u_1(n) \\
  u_2(n+1) & u_2(n) \\
\end{vmatrix}
\\
&=|u(n+1)|\cdot |u(n)|\cdot \sin\langle u(n+1), u(u)\rangle,
\end{split}
\end{equation}
where $\langle u(n+1), u(u)\rangle\in (-\pi, \pi)$ is the angle of rotation from $u(n+1)$ to $u(n)$. This observation is crucial in proving \eqref{eq:IDS}. In the following we would identify $\mathbb{R}^2$ with $\mathbb{C}$. We shall consider two cases of the sign of $W_0$.

Case 1. $W_0<0$. $u(n)$ would rotate around the origin anticlockwise in $\mathbb{R}^2$ as $n$ increases. The angle of rotation from $u(n)$ to $u(n+1)$, $\langle u(u), u(n+1)\rangle$ will belong to $(0,\pi)$ for all $n\in \mathbb{Z}$. Let $\tilde{u}(t)$ be the linear interpolation of the sequence $\{u(n)\}_{n\in \mathbb{Z}}$, that  is,
\begin{align*}
\tilde{u}(t)&=\tilde{u}_1(t)+i\tilde{u}_2(t)\\
&=(n+1-t)u(n)+(t-n)u(n+1),\quad n\leq t\leq n+1,\ n\in \mathbb{Z}.
\end{align*}
By \eqref{eq:W}, $\tilde{u}(t)\neq0$ for all $t\in \mathbb{R}$, otherwise $u_1$ and $u_2$ would be linearly dependent. Therefore,
\begin{align*}
\tilde{N}_{L, j}(E):&=\#\{t\in (1, L]; \tilde{u}_j(t)=0\}\\
&=\#\{n\in (1, L]\cap \mathbb{Z}; u_j(n)=0~\text{or}~u_j(n-1)u_j(n)<0\}=:N_{L, j}^H(E)
\end{align*}
for $j=1$, $2$ and
\begin{equation}\label{eq:NL}
\begin{split}
\tilde{N}_L(E):&=\#\{t\in (1, L]; \tilde{u}_1(t)\cdot \tilde{u}_2(t)=0\}\\
&=\tilde{N}_{L, 1}(E)+\tilde{N}_{L, 2}(E)=N_{L, 1}^H(E)+N_{L, 2}^H(E).
\end{split}
\end{equation}
Let $\arg \tilde{u}(t)$ be a continuous branch of the argument of $\tilde{u}(t)$. \eqref{eq:W} yields that $\tilde{u}(t)$ would cross the $u_1$- and $u_2$-axis alternatively. The angle of rotation from one crossing to the subsequent one is exactly $\pi/2$. Moreover, if $\arg \tilde{u}(t_2)-\arg \tilde{u}(t_1)=\pi/2$, there is a unique time $t_0\in (t_1, t_2]$ when $\tilde{u}(t)$ crosses the axis at $t_0$. Consequently,
\begin{equation}\label{eq:-pi/2}
\Big|\arg \tilde{u}(L)-\arg \tilde{u}(1)-\frac{\pi}{2}\tilde{N}_L(E)\Big|<\frac{\pi}{2}
\end{equation}
and
\begin{equation}\label{eq:arg-k}
\lim_{L\rightarrow\infty}\frac{\arg \tilde{u}(L)}{L-1}=\frac{\pi}{2}\lim_{L\rightarrow\infty}
\frac{N_{L, 1}^H(E)+N_{L, 2}^H(E)}{L-1}=\pi[1-k^H(E)]
\end{equation}
by \eqref{eq:-pi/2}, \eqref{eq:NL} and \Cref{lem:NkL}.

By \eqref{eq:Bloch}, there exists an $m_n\in \mathbb{Z}$ such that
\begin{equation*}
\arg \tilde{u}(n+p)-\arg \tilde{u}(n)=p\vartheta+2m_n\pi
\end{equation*}
for all $n\in \mathbb{Z}$. From \eqref{eq:W} it follows that $0<2m_n<p$ and $\arg \tilde{u}(n+1)-\arg \tilde{u}(n)$, $\arg \tilde{u}(n+p+1)-\arg \tilde{u}(n+p)\in (0, \pi)$. Therefore,
\begin{equation*}
|[\arg \tilde{u}(n+p+1)-\arg \tilde{u}(n+1)]-[\arg \tilde{u}(n+p)-\arg \tilde{u}(n)]|<\pi,
\end{equation*}
which implies $m_n=m_{n+1}$. Hence $m:=m_n$ is independent of $n\in \mathbb{Z}$. By iteration one arrives at
\begin{equation*}
\arg \tilde{u}(lp+1)-\arg \tilde{u}(1)=lp\vartheta+2lm\pi
\end{equation*}
for all $l\in \mathbb{Z}_+$. A straightforward computation shows that
\begin{equation*}
\lim_{l\rightarrow\infty}\frac{\arg \tilde{u}(lp+1)}{lp}=\vartheta+\frac{m}{p}2\pi
\end{equation*}
and by \eqref{eq:arg-k},
\begin{equation}\label{eq:-k}
k^H(E)=1-\frac{\vartheta}{\pi}-\frac{2m}{p}.
\end{equation}
Furthermore, since $k^H(E)$ is continuous in $E$ and $D(E)=2\cos p\vartheta$, $m$ is independent of energies in the same band.

Case 2. $W_0>0$. $u(n)$ would rotate around the origin clockwise in $\mathbb{R}^2$ as $n$ increases. Define the continuous function $\tilde{u}(t)$ as above. \eqref{eq:-pi/2} and \eqref{eq:arg-k} take respectively the form of
\begin{align*}
&\Big|\arg \tilde{u}(L)-\arg \tilde{u}(1)+\frac{\pi}{2}\tilde{N}_L(E)\Big|<\frac{\pi}{2},\\
&-\lim_{L\rightarrow\infty}\frac{\arg \tilde{u}(L)}{L-1}=\pi[1-k^H(E)].
\end{align*}
In this case, there is an $m\in \mathbb{Z}$ independent of $n$ and energies in the same band such that $-p<2m<0$ and
\begin{equation*}
\arg \tilde{u}(n+p)-\arg \tilde{u}(n)=p\vartheta+2m\pi
\end{equation*}
for all $n\in \mathbb{Z}$. Thus
\begin{equation*}
-\lim_{l\rightarrow\infty}\frac{\arg \tilde{u}(lp+1)}{lp}=-\Big(\vartheta+\frac{m}{p}2\pi\Big)
\end{equation*}
and
\begin{equation}\label{eq:+k}
k^H(E)=1+\frac{\vartheta}{\pi}+\frac{2m}{p}.
\end{equation}

At last, we make use of properties of the IDS to obtain the final conclusion. It is clear that
\begin{equation*}
k^H(E)=
\begin{cases}
1,\quad &E>>1,\\
0,\quad &-E>>1.
\end{cases}
\end{equation*}
Since each band $B_j$ is parameterized by $D(E)=2\cos p\vartheta$ with $\vartheta\in [0, \pi/p]$, from \eqref{eq:-k} and \eqref{eq:+k} it follows that
\begin{equation*}
k^H(b_j)-k^H(a_j)=\frac{1}{p}
\end{equation*}
for all $j=1$, $\ldots$, $p$. The continuity and the constancy on gaps of $k^H$ yield
\begin{equation}\label{eq:kG}
k^H(E)=\frac{j}{p},\quad E\in G_j,\ j=0, \ldots, p.
\end{equation}
By \eqref{eq:kG}, $k^H(E)\in [(j-1)/p, j/p]$ for $E\in B_j$ and $j=1$, $\ldots$, $p$. Because $\vartheta/\pi\in [0, 1/p]$, we must have $1-2m/p=j/p$ and $1+2m/p=(j-1)/p$ in \eqref{eq:-k} and \eqref{eq:+k}, respectively. From the evenness of $2m$ it follows that
\begin{equation*}
k^H(E)=
\begin{cases}
\dfrac{j}{p}-\dfrac{\vartheta}{\pi},\quad &E\in B_j, 1\leq j\leq p, p-j=2m, m\in \mathbb{Z},\\
\dfrac{j-1}{p}+\dfrac{\vartheta}{\pi},\quad &E\in B_j, 1\leq j\leq p, p-j=2m-1, m\in \mathbb{Z}.
\end{cases}
\end{equation*}
\end{proof}

\begin{remark}
Since $Hu=Eu \Leftrightarrow Ju=-Eu$, it is clear that $N_L^H(E)=N_L^J(-E)$, which yields
\begin{equation}\label{eq:kHJ}
  k^H(E)+k^J(-E)=1
\end{equation}
by \Cref{lem:NkL} and taking limits. From \eqref{eq:kHJ} it follows that \eqref{eq:IDS} for $J$ takes the form of
\begin{equation}\label{eq:IDSJ}
k^J(E)=
\begin{cases}
\dfrac{j}{p},\quad &E\in G_j,\ j=0, \ldots, p,\\
\dfrac{j}{p}-\dfrac{\vartheta}{\pi},\quad &E\in B_j, 1\leq j\leq p, j=2m, m\in \mathbb{Z},\\
\dfrac{j-1}{p}+\dfrac{\vartheta}{\pi},\quad &E\in B_j, 1\leq j\leq p, j=2m+1, m\in \mathbb{Z},
\end{cases}
\end{equation}
where $\vartheta\in [0, \pi/p]$ is determined by $\Tr M_E^J(p)=2\cos p\vartheta$ for $E\in \sigma(H)$, and $M_E^J(n)$ is the transfer matrix associated with the equation $Ju=Eu$.
\iffalse
A detailed proof.
As for $J$, since $\Tr M_E^J(p)=D(-E)$ and $\sigma(J)=-\sigma(H)$, we obtain $\sigma(J)=[\Tr M_\cdot^J(p)]^{-1}([-2, 2])$. Furthermore, \Cref{lem:Bloch} is true with $H$ replaced by $J$. Indeed, $\Tr M_E^J(p)=2\cos p\vartheta$ is equivalent to $e^{ip\vartheta}\in \sigma(M_E^J(p))$. An eigenvector $(u(1), u(0))^T$ of $M_E^J(p)$ belonging to $e^{ip\vartheta}$ is an initial value of a nontrivial solution $u$ of $Ju=Eu$ satisfying \eqref{eq:Bloch}. With the same argument as above, \eqref{eq:-k} and \eqref{eq:+k} take the form of
\begin{align}
  k^J(E) & =\frac{1}{\pi}\lim_{l\rightarrow\infty} \frac{\arg\tilde{u}(lp)}{lp}
  =\frac{\vartheta}{\pi}+\frac{2m}{p},\label{eq:-kJ} \\
  k^J(E) & =-\frac{1}{\pi}\lim_{l\rightarrow\infty} \frac{\arg\tilde{u}(lp)}{lp}
  =-\frac{\vartheta}{\pi}-\frac{2m}{p},\label{eq:+kJ}
\end{align}
respectively. In this case, \eqref{eq:kG} holds for $J$ and $k^J(E)\in [(j-1)/p, j/p]$ for $E\in G_j$ and $j=1$, $\ldots$, $p$. Therefore, $2m/p=(j-1)/p$ and $-2m/p=j/p$ in \eqref{eq:-kJ} and \eqref{eq:+kJ}, respectively. From the evenness of $2m$ we get
\begin{equation*}
k^J(E)=
\begin{cases}
\frac{j}{p}-\frac{\vartheta}{\pi},\quad &E\in B_j, 1\leq j\leq p, j=2m, m\in \mathbb{Z},\\
\frac{j-1}{p}+\frac{\vartheta}{\pi},\quad &E\in B_j, 1\leq j\leq p, j=2m+1, m\in \mathbb{Z}.
\end{cases}
\end{equation*}
\fi
\end{remark}

\begin{corollary}\label{cor:IDS1}
Suppose that $G_j$ is a gap defined in \Cref{th:IDS}, $E\in \partial G_j$. Then there is a real Bloch wave solution to \eqref{eq:eigenH} such that $N_{p+1}^H(E)=p-j$.
\end{corollary}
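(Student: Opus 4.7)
The plan is to exhibit a real Bloch wave at $E$ and exploit its $p$-(anti)periodicity to turn the IDS value $k^H(E)=j/p$ on $\overline{G_j}$ into the claimed count.

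First, for $E\in\partial G_j$, continuity of the right-hand side of \eqref{eq:IDS} combined with its value $j/p$ on $G_j$ forces $\vartheta=0$ or $\vartheta=\pi/p$ at $E$, equivalently $D(E)=2\epsilon$ with $\epsilon=+1$ when $p-j$ is even and $\epsilon=-1$ when $p-j$ is odd. Since $M_E(p)$ is real with $\det M_E(p)=1$ and $\Tr M_E(p)=2\epsilon$, it has the real eigenvalue $\epsilon$. Taking a real eigenvector as the initial data $(u(0),u(1))^T$ and extending by \eqref{eq:eigenH} yields a real nontrivial solution with $u(n+p)=\epsilon\, u(n)$ for every $n\in\mathbb{Z}$. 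After rescaling so that $u(0)^2+u(1)^2=1$, choosing $\theta$ with $\cos\theta=u(0)$ and $\sin\theta=u(1)$ identifies $u$ with $u^H$ in the sense of \Cref{df:Nk}.

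Because $\epsilon^2=1$, the two conditions $u(n)=0$ and $u(n-1)u(n)<0$ defining a generalized zero are invariant under the shift $n\mapsto n+p$. Hence the zero-count is additive over periods:
\begin{equation*}
N_{kp+1}^H(E)=k\,N_{p+1}^H(E),\quad k\in\mathbb{Z}_+.
\end{equation*}
Combined with \Cref{lem:NkL} this gives $k_{kp+1}^H(E)=1-N_{p+1}^H(E)/p$, a quantity independent of $k$. Passing to the limit $k\to\infty$, the left-hand side converges to $k^H(E)$, which by \Cref{th:IDS} and the continuity of the IDS equals $j/p$ on $\overline{G_j}$. Equating the two sides yields $N_{p+1}^H(E)=p-j$.

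The main delicate point is the first step: checking, via the three-branch formula of \Cref{th:IDS}, that at both endpoints $b_j$ and $a_{j+1}$ of $G_j$ the quasimomentum $\vartheta\in\{0,\pi/p\}$ is chosen so as to produce one and the same sign $\epsilon=(-1)^{p-j}$ (a short case analysis on the parity of $p-j$), and ensuring that a real $\epsilon$-eigenvector of $M_E(p)$ exists even when $M_E(p)$ is non-diagonalizable; the presence of a Jordan block for eigenvalue $\epsilon$ still leaves a one-dimensional real eigenspace, which is all that is required.
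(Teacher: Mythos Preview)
Your proof is correct and follows essentially the same route as the paper: produce a real Bloch wave at the band edge (you do this directly via a real eigenvector of $M_E(p)$, the paper by taking real/imaginary parts of the complex Bloch wave), observe that $(\pm)$-periodicity makes the generalized-zero count additive over periods, and then invoke \Cref{lem:NkL} together with $k^H(E)=j/p$ from \Cref{th:IDS} to solve for $N_{p+1}^H(E)$. Your additional parity computation pinning down $\epsilon=(-1)^{p-j}$ is correct but not actually needed for the conclusion; the paper simply records $D(E)=\pm2$ without fixing the sign.
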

\begin{proof}
$E\in \partial G_j$ yields $D(E)=\pm2$ and $\vartheta=0, \pi/p$. By taking real and imaginary parts, the existence of a nontrivial Bloch wave solution implies the existence of a nontrivial real one, which is denoted by $u_E^H$. From $u_E^H(n+p)=\pm u_E^H(n)$, $n\in \mathbb{Z}$, it follows that $N_{lp+1}^H(E)=lN_{p+1}^H(E)$ for all $l\in \mathbb{Z}_+$. Therefore,
\begin{equation*}
  k^H(E)=1-\frac{N_{p+1}^H(E)}{p}=\frac{j}{p}
\end{equation*}
by \Cref{lem:NkL} and \Cref{th:IDS}. Hence $N_{p+1}^H(E)=p-j$.
\end{proof}

\Cref{th:IDS} reveals a new proof of the following classical result.
\begin{corollary}\label{cor:IDS2}
Suppose that $E$ is the $j$-th eigenvalue of the restriction of $H$ to $[2, p]\cap \mathbb{Z}$ with Dirichlet boundary condition $u(1)=u(p+1)=0$, $u_E^H$ is the corresponding eigenvector. Then $N_{p+1}^H(E)=p-j$.
\end{corollary}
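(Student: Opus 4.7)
The plan is to recognize the Dirichlet operator of the corollary as the finite-volume operator $H_{p+1}$ of \Cref{df:Nk} at the boundary parameter $\theta = 0$, and then to read off $N_{p+1}^H(E)$ directly from the identity in \Cref{lem:NkL}.

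To execute this, I would set $\theta = 0$ in \Cref{df:Nk}, so that $u^H(0) = 1$ and $u^H(1) = 0$, and the boundary condition $u(0)/u(1) = \cot\theta$ degenerates into the hard Dirichlet condition $u(1) = 0$. With $L = p+1$, the second boundary condition $u(L) = 0$ is $u(p+1) = 0$, so $H_{p+1}$ is the $(p-1)\times(p-1)$ restriction of $H$ to $[2,p]\cap\mathbb{Z}$ with $u(1) = u(p+1) = 0$ — exactly the operator of the corollary. The given eigenvector $u_E^H$, extended to $\mathbb{Z}$ by the recurrence, is a nonzero scalar multiple of $u^H$ at the same energy (both are characterised up to scale by $u(1) = 0$), so the generalized-zero counts on $(1,p+1]$ coincide. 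Since $E$ is the $j$-th eigenvalue of this matrix, \Cref{df:Nk} gives $k_{p+1}^H(E) = j/p$, and \Cref{lem:NkL} gives $k_{p+1}^H(E) = 1 - N_{p+1}^H(E)/p$. Equating yields $N_{p+1}^H(E) = p - j$.

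The main (and essentially only) technical point in this plan is the legitimacy of \Cref{lem:NkL} at the degenerate value $\theta = 0$, where $\cot\theta$ is formally infinite rather than one of the finite values handled in \cite{DS83} and in the commented-out argument. I would discharge this either by re-running the Lagrange-diagonalization argument with $u(1) = 0$ imposed as a hard constraint from the outset — observing that the index $n = 1$ lies outside the counting interval $(1,p+1]$, so the generalized-zero count is unambiguous — or by a simple translation: putting $\tilde V(n) = V(n+1)$ and $\tilde u(n) = u(n+1)$ turns the $\theta = 0$ Dirichlet condition $u(1) = 0$ into the $\cot\tilde\theta = 0$ condition $\tilde u(0) = 0$ for the shifted potential, which is exactly the case treated explicitly in \cite{DS83}.
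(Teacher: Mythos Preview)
Your argument is correct: once the Dirichlet operator of the corollary is identified with the finite-volume operator $H_{p+1}$ at $\theta=0$, the conclusion $N_{p+1}^H(E)=p-j$ drops straight out of \Cref{lem:NkL} applied at finite $L=p+1$. The handling of the degenerate endpoint $\theta=0$ via the shift $\tilde V(n)=V(n+1)$ is also fine.

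However, this is a genuinely different route from the paper's. The sentence preceding the corollary announces the intent: ``\Cref{th:IDS} reveals a new proof of the following classical result.'' Accordingly, the paper's proof invokes the classical fact (cited from \cite{Las92}) that the $j$-th Dirichlet eigenvalue lies in $\overline{G_j}$, then uses the resulting block-periodicity of the extended eigenvector to get $N_{lp+1}^H(E)=l\,N_{p+1}^H(E)$, and finally combines \Cref{lem:NkL} in the limit $L\to\infty$ with the gap value $k^H(E)=j/p$ from \Cref{th:IDS}. Your proof, by contrast, is the standard finite-volume Sturm oscillation argument: it uses \Cref{lem:NkL} only at the single scale $L=p+1$ and never touches \Cref{th:IDS} or the gap-location fact. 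What you gain is a shorter, self-contained argument with no appeal to Floquet theory; what you lose is precisely the point the paper is making here, namely that the oscillation count can be read off from the IDS formula rather than the other way round.
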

\begin{proof}
It is known that the eigenvalues of the boundary value problem above are simple and separate the bands of $\sigma(H)$ \cite{Las92}. So $E\in \overline{G_j}$. Consequently, $(u(1), \ldots, u(p+1))$ and $(u(lp+1), \ldots, u(lp+p+1))$ are linearly dependent for all $l\in \mathbb{Z_+}$. Hence $N_{lp+1}^H(E)=lN_{p+1}^H(E)$ and $N_{p+1}^H(E)=p-j$ by \Cref{lem:NkL} and \Cref{th:IDS}.
\end{proof}

\iffalse
\begin{example}
Consider $H$ and $J$ with $V=0$. Thus $V$ is $1$-periodic. It is well-known that $\sigma(H)=\sigma(J)=[-2, 2]$. So both $H$ and $J$ have a band $B_1=[-2, 2]$ and two unbounded gaps $G_0=(-\infty, -2]$ and $G_1=[2, \infty)$. From
\begin{align*}
&M_E(1)=
\begin{pmatrix}
  E & -1 \\
  1 & 0 \\
\end{pmatrix},\quad D(E)=E,\\
&M_E^J(1)=
\begin{pmatrix}
  -E & -1 \\
  1 & 0 \\
\end{pmatrix},\quad \Tr M_E^J(1)=-E
\end{align*}
it follows that $B_1$ is parameterized by $E=2\cos\vartheta$ and $-E=2\cos\vartheta$ respectively for $H$ and $J$, where $\vartheta\in [0, \pi]$. Since $p-j=1-1$ is even and $j=1$ is odd, \eqref{eq:IDS} and \eqref{eq:IDSJ} yield
\begin{equation*}
k^H(E)=1-\frac{\vartheta}{\pi},\quad k^J(E)=\frac{\vartheta}{\pi}
\end{equation*}
for $E\in B_1$. This coincide with the monotonicity of the ids on the spectrum.
\end{example}

\fi

\section{Acknowledgments}

Supported by The National Nature Science Foundation of China (No. 12001397), The Nature Science Foundation of Tianjin (No. 20JCQNJC00970).
%The authors thank the anonymous referee for valuable comments.

\end{document}